\newtheorem{theorem}{Theorem}{}
{}
{}
{}
{}
\chardef\No=24
\begin{document}

\begin{center}
{\Large Representations for parameter derivatives of some
Koornwinder polynomials in two variables}
\end{center}

\

{\centerline { Rabia Aktas}}

\

{\centerline {E-mail address: raktas@science.ankara.edu.tr}}

\

\centerline{\it Department of Mathematics, Ankara University, Ankara, Turkey}

\

\begin{abstract}  In this paper, we give the parameter derivative
representations in the form of
\begin{equation*}
\frac{\partial P_{n,k}(\lambda;x,y)}{\partial\lambda}=\sum_{m=0}^{n-1}
\sum_{j=0}^{m}d_{n,j,m}P_{m,j}(\lambda;x,y)
+\sum_{j=0}^{k}e_{n,j,k}P_{n,j}(\lambda;x,y)
\end{equation*}
for some Koornwinder polynomials where $\lambda$ is a parameter and $0\leq
k\leq n$; $n=0,1,2,...$ and present orthogonality properties of the
parametric derivatives of these polynomials.

\

{\it{Key words and Phrases}}: Orthogonal polynomials; Jacobi polynomials; Laguerre polynomials; Koornwinder polynomials; parameter derivatives.

  2010 \textit{Mathematics Subject Classification}: Primary 42C05, 33C45
\end{abstract}

\section{Introduction}

Recently, many authors \cite%
{A,A1,Froehlich,Koepf,Koepf-schmersau,Lewanowicz,Ronveaux,Szmytkowski,Szmytkowski5,Szmytkowski6,Szmytkowski4,Szmytkowski1,Szmytkowski2,Szmytkowski3}
have studied the representations for the parameter derivatives of the
classical orthogonal polynomials and various special functions which have
many applications in applied mathematics, mathematical and theoretical
physics and many branches of mathematics. In \cite{Szmytkowski5,Szmytkowski6}%
, the derivative of the Legendre function of the first kind, with respect to
its degree $\nu$, $[{\partial P_{\nu}(z)}/{\partial\nu}]_{\nu=n} (n \in {%
\mathbb{N}})$, and its some representations have been examined by
Szmytkowski, which are seen in some engineering and physical problems such
as in the general theory of relativity and in solving some boundary value
problems of potential theory, of electromagnetism and of heat conduction in
solids. In \cite{Szmytkowski4}, explicit expressions of second-order
derivative $[{\partial^{2} P_{\nu}(z)}/{\partial\nu}^{2}]_{\nu=0}$ and of
third-order derivative $[{\partial^{3} P_{\nu}(z)}/{\partial\nu}^{3}]_{\nu=0}
$ have been derived. In \cite{Szmytkowski1,Szmytkowski2,Szmytkowski3}, the
author has presented the derivatives of the associated Legendre function of
the first kind with respect to its order and its degree and also a
relationship between these derivatives. Such derivatives of the associated
Legendre function are met in solutions of various problems of theoretical
acoustics, heat conduction and other branches of theoretical physics. In
\cite{Froehlich,Koepf,Koepf-schmersau,Lewanowicz,Ronveaux,Szmytkowski,Wulkow}%
, the representations of parametric derivatives in the form
\begin{equation}
\frac{\partial P_{n}(\lambda;x)}{\partial\lambda}=\sum_{k=0}^{n}
c_{n,k}(\lambda)P_{k}(\lambda;x)  \label{1.1}
\end{equation}
for orthogonal polynomials in one variable, $\lambda$ being a parameter,
have been studied. For instance, the representations of parametric
derivatives have been obtained by Wulkow \cite{Wulkow} for discrete Laguerre
polynomials, by Froehlich \cite{Froehlich} for Jacobi polynomials $%
P_{n}^{(\alpha,\beta)}(x)$, by Koepf \cite{Koepf} for generalized Laguerre
polynomials $L_{n}^{(\alpha)}(x)$ and Gegenbauer polynomials $%
C_{n}^{(\lambda)}(x)$, by Koepf and Schmersau \cite{Koepf-schmersau} for all
the continuous and discrete classical orthogonal polynomials. In \cite%
{Szmytkowski}, Szmytkowski has derived again the expansions in the form of (%
\ref{1.1}) for Jacobi polynomials, Gegenbauer polynomials and the
generalized Laguerre polynomials by means of a method which is different
from the methods given by Froehlich \cite{Froehlich} and Koepf \cite{Koepf}.
In \cite{Ronveaux}, Ronveaux \emph{et al.} have presented the recurrence
relations for coefficients in the expansion
\begin{equation*}
\frac{\partial^{m} P_{n}(\lambda;x)}{\partial\lambda^{m}}=\sum_{k=0}^{n}
a_{n,k}(m,\lambda)P_{k}(\lambda;x)\qquad (m \in {\mathbb{N}})
\end{equation*}
which is more general than the expansion form of (\ref{1.1}). Moreover,
Lewanowicz \cite{Lewanowicz} has given a method to obtain iteratively
explicit parameter derivative representations of order $m=1,2,...$ for
almost all the classical orthogonal polynomial families, i.e., continuous
classical orthogonal polynomials, classical orthogonal polynomials of a
discrete variable or q-classical orthogonal polynomials of the Hahn's class.
\newline
The classical Jacobi polynomials $P_{n}^{(\alpha,\beta)}(x)$ are defined by
the Rodrigues formula
\begin{equation*}
P_{n}^{(\alpha,\beta)}(x)=\frac{(-1)^{n}}{2^{n}n!}(1-x)^{-{\alpha}}(1+x)^{-{%
\beta}} \frac{d^{n}}{dx^{n}}\{(1-x)^{n+\alpha}(1+x)^{n+\beta}\}
\end{equation*}
and they satisfy the following orthogonality relation
\begin{eqnarray}
\begin{split}
\int_{-1}^{1}P^{(\alpha,\beta)}_{n}(x)P^{(\alpha,\beta)}_{m}(x)(1-x)^{%
\alpha}(1+x)^{\beta}dx&=\frac{2^{\alpha+\beta+1}\Gamma(\alpha+n+1)\Gamma(%
\beta+n+1)} {n!(\alpha+\beta+2n+1)\Gamma(\alpha+\beta+n+1)}\delta_{n,m} \\
&=d_{n}^{(\alpha,\beta)}\delta_{n,m}  \label{j1}
\end{split}%
\end{eqnarray}
where $\delta_{n,m}$ denotes Kronecker's delta \cite{Rainville}.\newline
The generalized Laguerre polynomials defined by \cite{Rainville}
\begin{equation*}
L_{n}^{(\alpha)}(x)=\frac{x^{-\alpha}e^{x}}{n!} \frac{d^{n}}{dx^{n}}\{e^{-x}
x^{n+\alpha}\}
\end{equation*}
hold
\begin{eqnarray*}
\begin{split}
\int_{0}^{\infty}L^{(\alpha)}_{n}(x)L^{(\alpha)}_{m}(x)e^{-x} x^{\alpha}dx=%
\frac{\Gamma(\alpha+n+1)} {n!}\delta_{n,m}.
\end{split}%
\end{eqnarray*}
The representations of parametric derivatives obtained for the Jacobi
polynomials $P_{n}^{(\alpha,\beta)}$ (\cite{Froehlich}) and generalized
Laguerre polynomials $L_{n}^{(\alpha)}(x)$ (\cite{Koepf}) are as follows
\begin{equation}
\begin{split}
\frac{\partial P_{n}^{(\alpha,\beta)}(x)}{\partial\alpha} &=\sum_{k=0}^{n-1}%
\frac{1}{n+k+\alpha+\beta+1}P_{n}^{(\alpha,\beta)}(x) \\
&+\frac{(\beta+1)_{n}}{(\alpha+\beta+1)_{n}}\sum_{k=0}^{n-1} \frac{%
(2k+\alpha+\beta+1)(\alpha+\beta+1)_{k}} {(n-k)(n+k+\alpha+\beta+1)(%
\beta+1)_{k}}P_{k}^{(\alpha,\beta)}(x)  \label{1.2}
\end{split}%
\end{equation}
and
\begin{equation}
\begin{split}
\frac{\partial P_{n}^{(\alpha,\beta)}(x)}{\partial\beta} &=\sum_{k=0}^{n-1}%
\frac{1}{n+k+\alpha+\beta+1}P_{n}^{(\alpha,\beta)}(x) \\
&+\frac{(\alpha+1)_{n}}{(\alpha+\beta+1)_{n}}\sum_{k=0}^{n-1} \frac{%
(-1)^{n+k}(2k+\alpha+\beta+1)(\alpha+\beta+1)_{k}}{(n-k)(n+k+\alpha+%
\beta+1)(\alpha+1)_{k}} P_{k}^{(\alpha,\beta)}(x)  \label{1.3}
\end{split}%
\end{equation}
for $\alpha,\beta>-1$ and
\begin{equation}
\frac{\partial L_{n}^{(\alpha)}(x)}{\partial \alpha} =\sum_{k=0}^{n-1}\frac{1%
}{n-k}L_{k}^{(\alpha)}(x)  \label{1.3*}
\end{equation}
for $\alpha>-1$ where the Pochhammer symbol is defined by
\begin{equation*}
(\alpha)_{0}=1, \quad (\alpha)_{k}=\alpha(\alpha+1)...(\alpha+k-1),
k=1,2,....
\end{equation*}
With motivation from the expansion (\ref{1.1}) for orthogonal polynomials in
one variable, we consider similar expansion in the form of
\begin{equation}
\frac{\partial P_{n,k}(\lambda;x,y)}{\partial\lambda}=\sum_{m=0}^{n-1}
\sum_{j=0}^{m}d_{n,j,m}P_{m,j}(\lambda;x,y)
+\sum_{j=0}^{k}e_{n,j,k}P_{n,j}(\lambda;x,y)  \label{1.1*}
\end{equation}
for orthogonal polynomials of variables $x$ and $y$, with $\lambda$ being a
parameter and $0\leq k\leq n;\quad n=0,1,2,...$. In the recent papers \cite%
{A,A1}, parametric derivative representations in the form of (\ref{1.1*})
for Jacobi polynomials on the triangle and a family of orthogonal
polynomials with two variables on the unit disc have been studied. The
present paper is devoted to obtain parametric derivatives for the
polynomials on the parabolic biangle, on the square and some new
examples of Koornwinder polynomials introduced in \cite{FPP} (see also \cite%
{Marcellan}). Although the parameter derivatives of these polynomials with
respect to their some parameters are in the form of (\ref{1.1*}), there
exist some other parameters such that derivatives with respect to them are
not in the form of (\ref{1.1*}). Because, some of the coefficients $d_{n,j,m}
$ and $e_{n,j,k}$ depend on the variable $x$. \newline
The set up of this paper is summarized as follows. In section 2, we remind
the method given by Koornwinder \cite{Koornwinder} and some examples of this
method. Section 3 contains parametric derivatives of Koornwinder polynomials
on the parabolic biangle, orthogonal polynomials on the square, Laguerre-
Jacobi Koornwinder polynomials and Laguerre-Laguerre Koornwinder
polynomials. In section 4, some orthogonality relations for the derivatives
of these polynomials are studied.

\section{Preliminaries}

First we recall some basic properties of orthogonal polynomials in two
variables \cite{DX}.\newline
Let $\Pi$ be the set of all polynomials in two variables and let $\Pi_{n}$
denote the linear space of polynomials in two variables of total degree at
most $n$.\newline
A polynomial $p\in \Pi_{n}$ is called an orthogonal polynomial with respect
to the weight function $\omega(x,y)$ if
\begin{equation*}
{\Big\langle p,q\Big\rangle}:=\int_{\Omega}p(x,y) q(x,y)\omega(x,y) dxdy=0
\end{equation*}
for all $q\in \Pi_{n-1}$. Let $V_{n}$ denote the space of orthogonal
polynomials of degree $n$ with respect to ${\langle ,\rangle}$.\newline
In 1975, T. H. Koornwinder \cite{Koornwinder} constructed the following
method to derive orthogonal polynomials in two variables from orthogonal
polynomials in one variable. \newline
Let $\omega_{1}(x)$ and $\omega_{2}(y)$ be univariate weight functions
defined on the intervals $(a, b)$ and $(c, d)$, respectively. Let $\rho(x)$
be a positive function on $(a, b)$ which is either a polynomial of degree $r
, (r=0,1,...)$ or the square root of a polynomial of degree $2r\quad(r=\frac{%
1}{2},1,\frac{3}{2},...)$. If $\rho(x)$ is not a polynomial , $c=-d<0$ and $%
\omega_{2}(y)$ is an even function on $(-d,d)$. For $k\geq0$, let ${%
p_{n}(x;k), n=0,1,...}$ be orthogonal polynomial respect to the weight
function $\rho^{2k+1}(x) \omega_{1}(x)$ and let ${q_{n}(y)}, n\geq0$ be
orthogonal polynomial with respect to the weight function $\omega_{2}(y)$.
Then, the family of polynomials
\begin{equation*}
P_{n,k}(x,y)=p_{n-k}(x;k)\rho^{k}(x) q_{k}(\frac{y}{\rho(x)}), \quad 0\leq
k\leq n
\end{equation*}
are orthogonal with respect to the Koornwinder weight function $%
\omega(x,y)=\omega_{1}(x)\omega_{2}(\frac{y}{\rho(x)})$ over the domain
\begin{equation*}
\Omega=\{(x,y): a\leq x \leq b, c\rho(x) \leq y \leq d\rho(x)\}
\end{equation*}
with respect to the inner product
\begin{equation*}
{\Big\langle f, g \Big\rangle}:=\int_{\Omega}f(x,y) g(x,y)\omega(x,y)dxdy.
\end{equation*}
\newline
Some examples of Koornwinder's method are as follows:\newline
(i) Orthogonal polynomials on the parabolic biangle:\newline
For $\alpha,\beta>-1$, Koornwinder polynomials on the parabolic biangle $%
\Omega=\{(x,y): y^{2}\leq x \leq 1\}$ correspond with
\begin{equation*}
\omega_{1}(x)=(1-x)^{\alpha}x^{\beta}, \quad 0 \leq x \leq 1,
\end{equation*}
\begin{equation*}
\omega_{2}(y)=(1-y^{2})^{\beta}, \quad -1 \leq y \leq 1,
\end{equation*}
\begin{equation*}
\rho(x)=\sqrt{x}.~~~~~~~~~~~~~~~~~~~~~~~~~
\end{equation*}
These polynomials can be defined as
\begin{equation}  \label{t2}
P^{(\alpha,\beta)}_{n,k}(x,y)=P^{(\alpha,\beta+k+\frac{1}{2}%
)}_{n-k}(2x-1)x^{k/2}P^{(\beta,\beta)}_{k}(\frac{y}{\sqrt{x}}), \quad 0\leq
k \leq n
\end{equation}
and they are orthogonal with respect to the weight function
\begin{equation*}
\omega(x,y)=(1-x)^{\alpha}(x-y^{2})^{\beta}.
\end{equation*}
In fact,
\begin{eqnarray}
\begin{split}
{\Big\langle P^{(\alpha,\beta)}_{n,k}(x,y), P^{(\alpha,\beta)}_{m,j}(x,y)%
\Big\rangle}&:=\int_{\Omega}P^{(\alpha,\beta)}_{n,k}(x,y)
P^{(\alpha,\beta)}_{m,j}(x,y)(1-x)^{\alpha}(x-y^{2})^{\beta} dxdy \\
&=h_{n,k}^{(\alpha,\beta)}\delta_{n,m}\delta_{k,j}  \label{K11}
\end{split}%
\end{eqnarray}
where
\begin{equation}  \label{K1}
h_{n,k}^{(\alpha,\beta)}=\frac{2^{2\beta+1}\Gamma^{2}(\beta+k+1)\Gamma(%
\alpha+n-k+1)\Gamma(\beta+n+\frac{3}{2})} {(n-k)!k!(2\beta+2k+1)\Gamma(2%
\beta+k+1)\Gamma(\alpha+\beta+n+\frac{3}{2})(\alpha+\beta+2n-k+\frac{3}{2})}.
\end{equation}
(ii) Orthogonal polynomials on the square:\newline
For $\alpha,\beta,\gamma,\delta>-1$, the polynomials defined by
\begin{equation}  \label{t10}
P^{(\alpha,\beta,\gamma,\delta)}_{n,k}(x,y)=P^{(\alpha,\beta)}_{n-k}(x)P^{(%
\gamma,\delta)}_{k}(y), \quad 0\leq k \leq n
\end{equation}
are orthogonal with respect to the weight function $\omega(x,y)=(1-x)^{%
\alpha}(1+x)^{\beta}(1-y)^{\gamma}(1+y)^{\delta}$ on the square $%
\Omega=\{(x,y): -1\leq x \leq 1, -1\leq y \leq 1\}.$ In fact,
\begin{eqnarray}
\begin{split}
{\Big\langle P^{(\alpha,\beta,\gamma,\delta)}_{n,k}(x,y),
P^{(\alpha,\beta,\gamma,\delta)}_{m,j}(x,y)\Big\rangle}&:=\int_{\Omega}P^{(%
\alpha,\beta,\gamma,\delta)}_{n,k}(x,y)
P^{(\alpha,\beta,\gamma,\delta)}_{m,j}(x,y)(1-x)^{\alpha}(1+x)^{%
\beta}(1-y)^{\gamma}(1+y)^{\delta} dxdy \\
&=d_{n-k}^{(\alpha,\beta)}d_{k}^{(\gamma,\delta)}\delta_{n,m}\delta_{k,j}
\label{t10*}
\end{split}%
\end{eqnarray}
where $d_{n}^{(\alpha,\beta)}$ is given by (\ref{j1}). \newline
\newline
Some new examples of Koornwinder polynomials were introduced in \cite{FPP}
by using Koornwinder construction. These cases are as follows:\newline
(iii) Laguerre-Jacobi Koornwinder polynomials:\newline
The case of
\begin{equation*}
\omega_{1}(x)=x^{\alpha}e^{-x}, \quad 0 \leq x <\infty,
\end{equation*}
\begin{equation*}
\omega_{2}(y)=(1-y)^{\beta}, \quad -1\leq y \leq 1,
\end{equation*}
\begin{equation*}
\rho(x)=x ~~~~~~~~~~~~~~~~~~~~~~~~~
\end{equation*}
leads to the polynomials
\begin{equation}  \label{t4}
P^{(\alpha,\beta)}_{n,k}(x,y)=L^{(\alpha+2k+1)}_{n-k}(x)x^{k}P^{(%
\beta,0)}_{k}(\frac{y}{x}),\quad 0\leq k \leq n
\end{equation}
which are orthogonal with respect to the weight function $%
\omega(x,y)=x^{\alpha-\beta}e^{-x}(x-y)^{\beta}, (\alpha,\beta>-1)$ over the
domain $\Omega=\{(x,y): -x<y<x, x>0\}$. The following relation holds
\begin{eqnarray*}
\begin{split}
{\Big\langle P^{(\alpha,\beta)}_{n,k}(x,y), P^{(\alpha,\beta)}_{m,j}(x,y)%
\Big\rangle}:&=\int_{\Omega}P^{(\alpha,\beta)}_{n,k}(x,y)
P^{(\alpha,\beta)}_{m,j}(x,y)x^{\alpha-\beta}e^{-x}(x-y)^{\beta} dxdy \\
&=s_{n,k}^{(\alpha,\beta)}\delta_{n,m}\delta_{k,j}
\end{split}%
\end{eqnarray*}
where
\begin{equation}  \label{K2}
s_{n,k}^{(\alpha,\beta)}=\frac{2^{\beta+1}\Gamma(\alpha+n+k+2)} {%
(n-k)!(\beta+2k+1)}.
\end{equation}
\newline
(iv) Laguerre-Laguerre Koornwinder polynomials:\newline
From the Koornwinder construction with
\begin{equation*}
\omega_{1}(x)=x^{\alpha}e^{-x},\quad 0 \leq x <\infty,\quad\alpha>-1
\end{equation*}
\begin{equation*}
\omega_{2}(y)=y^{\beta}e^{-y}, \quad 0\leq y <\infty,\quad\beta>-1
\end{equation*}
\begin{equation*}
\rho(x)=x,\quad \alpha-\beta>-1,~~~~~~~~~~~~~~~~~~~~~~~~~
\end{equation*}
the Laguerre-Laguerre Koornwinder polynomials defined by
\begin{equation}  \label{t5}
P^{(\alpha,\beta)}_{n,k}(x,y)=L^{(\alpha+2k+1)}_{n-k}(x)x^{k}L^{(\beta)}_{k}(%
\frac{y}{x}),\quad 0\leq k \leq n
\end{equation}
are orthogonal with respect to the weight function $\omega(x,y)=x^{\alpha-%
\beta}y^{\beta}e^{-(x+y/x)}$ over the domain
\begin{equation*}
\Omega=\{(x,y): 0\leq x<\infty, 0\leq y<\infty\}.
\end{equation*}
It follows that
\begin{eqnarray*}
\begin{split}
{\Big\langle P^{(\alpha,\beta)}_{n,k}(x,y), P^{(\alpha,\beta)}_{m,j}(x,y)%
\Big\rangle}:=\int_{\Omega}P^{(\alpha,\beta)}_{n,k}(x,y)
P^{(\alpha,\beta)}_{m,j}(x,y)x^{\alpha-\beta}y^{\beta}e^{-(x+y/x)}dxdy
=t_{n,k}^{(\alpha,\beta)}\delta_{n,m}\delta_{k,j}
\end{split}%
\end{eqnarray*}
where
\begin{equation}  \label{K3}
t_{n,k}^{(\alpha,\beta)}=\frac{\Gamma(\beta+k+1)\Gamma(\alpha+n+k+2)} {%
k!(n-k)!}.
\end{equation}
\newline

\section{Parametric derivatives of some Koornwinder polynomials}

In \cite{A,A1}, parametric derivative representations in the form of (\ref%
{1.1*}) for Jacobi polynomials on the triangle and a family of orthogonal
polynomials with two variables on the unit disc have been studied. In this
section, we derive parameter derivatives of Koornwinder polynomials on the
parabolic biangle, on the square and some new examples of Koornwinder
polynomials introduced in \cite{FPP} (see also \cite{Marcellan}). Since
variable $x$ is included in some of the coefficients, there exist some
parameter derivatives such that they are not in the form (\ref{1.1*}). Now,
we consider such representations of parameter derivatives.

\begin{theorem}
\label{Theorem 1.2} For the Koornwinder polynomials over the parabolic
biangle $P^{(\alpha,\beta)}_{n,k}(x,y)$ defined by (\ref{t2}), the parameter
derivative with respect to the parameter $\alpha$ is as follows
\begin{eqnarray}
\begin{split}  \label{a100*}
\frac{\partial}{\partial \alpha} P^{(\alpha,\beta)}_{n,k}(x,y)=&%
\sum_{s=0}^{n-k-1}\frac{1} {\alpha+\beta+n+s+\frac{3}{2}}P^{(\alpha,%
\beta)}_{n,k}(x,y) \\
&+\sum_{s=0}^{n-k-1}\frac{(\alpha+\beta+2n-k-2s-\frac{1}{2})(\beta+n-s+\frac{%
1}{2})_{s+1}}{(s+1)(\alpha+\beta+2n-k-s+\frac{1}{2})(\alpha+\beta+n-s+\frac{1%
}{2})_{s+1}} P^{(\alpha,\beta)}_{n-s-1,k}(x,y)
\end{split}%
\end{eqnarray}
for $n\geq k+1$, $k\geq 0$ and $\frac{\partial}{\partial \alpha}
P^{(\alpha,\beta)}_{n,n}(x,y)=0$ for $n=k\geq 0.$
\end{theorem}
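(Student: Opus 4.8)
The plan is to exploit the product structure of the definition (\ref{t2}). Among the three factors of $P^{(\alpha,\beta)}_{n,k}(x,y)$, only the Jacobi factor $P^{(\alpha,\beta+k+\frac{1}{2})}_{n-k}(2x-1)$ depends on $\alpha$, while $x^{k/2}$ and $P^{(\beta,\beta)}_{k}(y/\sqrt{x})$ do not. Hence the derivative passes through the product, and the problem reduces to computing the one-variable parameter derivative $\frac{\partial}{\partial\alpha}P^{(\alpha,\beta+k+\frac{1}{2})}_{n-k}(2x-1)$ of a classical Jacobi polynomial with respect to its first parameter, at the fixed argument $2x-1$.

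First I would apply the known representation (\ref{1.2}) with degree $N=n-k$, first parameter $\alpha$, and second parameter $B=\beta+k+\frac{1}{2}$. This writes the derivative as a coefficient times $P^{(\alpha,\beta+k+\frac{1}{2})}_{n-k}(2x-1)$ plus a linear combination of the lower-degree Jacobi polynomials $P^{(\alpha,\beta+k+\frac{1}{2})}_{\ell}(2x-1)$, $\ell=0,\dots,n-k-1$. In the degenerate case $n=k$ the degree is $N=0$, the Jacobi factor equals the constant $1$, and its $\alpha$-derivative vanishes, yielding $\frac{\partial}{\partial\alpha}P^{(\alpha,\beta)}_{n,n}(x,y)=0$.

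Next I would multiply each term back by $x^{k/2}P^{(\beta,\beta)}_{k}(y/\sqrt{x})$ and use (\ref{t2}) in the form $P^{(\alpha,\beta+k+\frac{1}{2})}_{\ell}(2x-1)\,x^{k/2}P^{(\beta,\beta)}_{k}(y/\sqrt{x})=P^{(\alpha,\beta)}_{\ell+k,k}(x,y)$. Thus every term is again a parabolic-biangle Koornwinder polynomial with the \emph{same} second index $k$; this is exactly why the expansion (\ref{a100*}) remains inside the family $P^{(\alpha,\beta)}_{\cdot,k}$ and does not produce the full double sum of (\ref{1.1*}). To obtain the stated indexing I would set $\ell=n-k-s-1$, so that $\ell+k=n-s-1$ and the summation runs over $s=0,\dots,n-k-1$ in reversed order; the coefficient of $P^{(\alpha,\beta+k+\frac{1}{2})}_{n-k}(2x-1)$ already matches the first sum in (\ref{a100*}) with no reindexing.

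The remaining and main (though purely routine) obstacle is simplifying the coefficients after this substitution. One checks that $N-\ell$ becomes $s+1$, that $N+\ell+\alpha+B+1$ becomes $\alpha+\beta+2n-k-s+\frac{1}{2}$, and that $2\ell+\alpha+B+1$ becomes $\alpha+\beta+2n-k-2s-\frac{1}{2}$, matching the factors in (\ref{a100*}). The Pochhammer prefactors collapse via the elementary identity $\frac{(a)_{N}}{(a)_{N-s-1}}=(a+N-s-1)_{s+1}$: applied with $a=\beta+k+\frac{3}{2}$ and with $a=\alpha+\beta+k+\frac{3}{2}$, it turns the combined ratio $\frac{(\beta+k+3/2)_{n-k}}{(\alpha+\beta+k+3/2)_{n-k}}\cdot\frac{(\alpha+\beta+k+3/2)_{n-k-s-1}}{(\beta+k+3/2)_{n-k-s-1}}$ into $\frac{(\beta+n-s+\frac{1}{2})_{s+1}}{(\alpha+\beta+n-s+\frac{1}{2})_{s+1}}$, which is the coefficient in (\ref{a100*}). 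Collecting the constant term and the reindexed sum then completes the proof.
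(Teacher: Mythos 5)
Your proposal is correct and is exactly the paper's own argument: differentiate the product in (\ref{t2}), noting only the Jacobi factor $P^{(\alpha,\beta+k+\frac{1}{2})}_{n-k}(2x-1)$ depends on $\alpha$, apply (\ref{1.2}) with degree $n-k$ and second parameter $\beta+k+\frac{1}{2}$, and reabsorb each term into the family via the definition. The only difference is that you spell out the reindexing $\ell=n-k-s-1$ and the Pochhammer simplification $(a)_{N}/(a)_{N-s-1}=(a+N-s-1)_{s+1}$, which the paper leaves implicit; these checks are accurate.
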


\begin{proof}
If we differentiate the both side of (\ref{t2}) with respect to the parameter $\alpha$, we get
\begin{eqnarray*}
\begin{split}
\frac{\partial}{\partial \alpha}P^{(\alpha,\beta)}_{n,k}(x,y)=x^{k/2}P^{(\beta,\beta)}_{k}(\frac{y}{\sqrt{x}})\frac{\partial}{\partial \alpha}P^{(\alpha,\beta+k+\frac{1}{2})}_{n-k}(2x-1)
\end{split}
\end{eqnarray*}
By using (\ref{1.2}), it concludes that for $n\geq k+1$, $k\geq 0$
\begin{eqnarray*}
\begin{split}
\frac{\partial}{\partial \alpha} P^{(\alpha,\beta)}_{n,k}(x,y)=&\sum_{s=0}^{n-k-1}\frac{1}
{\alpha+\beta+n+s+\frac{3}{2}}P^{(\alpha,\beta)}_{n,k}(x,y)\\
&+\sum_{s=0}^{n-k-1}\frac{(\alpha+\beta+2n-k-2s-\frac{1}{2})(\beta+n-s+\frac{1}{2})_{s+1}}{(s+1)(\alpha+\beta+2n-k-s+\frac{1}{2})(\alpha+\beta+n-s+\frac{1}{2})_{s+1}}
P^{(\alpha,\beta)}_{n-s-1,k}(x,y).
\end{split}
\end{eqnarray*}
It is obvious from (\ref{t2}) that for $n=k\geq 0$, $\frac{\partial}{\partial \alpha} P^{(\alpha,\beta)}_{n,n}(x,y)=0.$
 \end{proof}

\begin{theorem}
\label{Theorem 1.22} For $\alpha,\beta,\gamma,\delta>-1$, the polynomials on
the square defined by (\ref{t10}) satisfy
\begin{equation}
\begin{split}
\frac{\partial P_{n,k}^{(\alpha,\beta,\gamma,\delta)}(x,y)}{\partial\alpha}
&=\sum_{s=0}^{n-k-1}\frac{1}{n-k+s+\alpha+\beta+1}P_{n,k}^{(\alpha,\beta,%
\gamma,\delta)}(x,y) \\
&+\sum_{s=0}^{n-k-1} \frac{(\alpha+\beta+2n-2k-2s-1)(\beta+n-k-s)_{s+1}} {%
(s+1)(\alpha+\beta+2n-2k-s)(\alpha+\beta+n-k-s)_{s+1}}P_{n-s-1,k}^{(\alpha,%
\beta,\gamma,\delta)}(x,y),  \label{1.23}
\end{split}%
\end{equation}

\begin{equation}
\begin{split}
\frac{\partial P_{n,k}^{(\alpha,\beta,\gamma,\delta)}(x,y)}{\partial\beta}
&=\sum_{s=0}^{n-k-1}\frac{1}{n-k+s+\alpha+\beta+1}P_{n,k}^{(\alpha,\beta,%
\gamma,\delta)}(x,y) \\
&+\sum_{s=0}^{n-k-1} \frac{(-1)^{n-k-s}(\alpha+\beta+2n-2k-2s-1)(%
\alpha+n-k-s)_{s+1}} {(s+1)(\alpha+\beta+2n-2k-s)(\alpha+\beta+n-k-s)_{s+1}}%
P_{n-s-1,k}^{(\alpha,\beta,\gamma,\delta)}(x,y)  \label{1.24}
\end{split}%
\end{equation}
for $n\geq k+1, k\geq 0$ and
\begin{equation*}
\frac{\partial P_{n,n}^{(\alpha,\beta,\gamma,\delta)}(x,y)}{\partial\alpha}=%
\frac{\partial P_{n,n}^{(\alpha,\beta,\gamma,\delta)}(x,y)}{\partial\beta}=0
\end{equation*}
for $n=k\geq 0.$ Also,
\begin{equation}
\begin{split}
\frac{\partial P_{n,k}^{(\alpha,\beta,\gamma,\delta)}(x,y)}{\partial\gamma}
&=\sum_{s=0}^{k-1}\frac{1}{\gamma+\delta+k+s+1}P_{n,k}^{(\alpha,\beta,%
\gamma,\delta)}(x,y) \\
&+\sum_{s=0}^{k-1} \frac{(\gamma+\delta+2k-2s-1)(\delta+k-s)_{s+1}} {%
(s+1)(\gamma+\delta+2k-s)(\gamma+\delta+k-s)_{s+1}}P_{n-s-1,k-s-1}^{(\alpha,%
\beta,\gamma,\delta)}(x,y),  \label{1.25}
\end{split}%
\end{equation}
and
\begin{equation}
\begin{split}
\frac{\partial P_{n,k}^{(\alpha,\beta,\gamma,\delta)}(x,y)}{\partial\delta}
&=\sum_{s=0}^{k-1}\frac{1}{\gamma+\delta+k+s+1}P_{n,k}^{(\alpha,\beta,%
\gamma,\delta)}(x,y) \\
&+\sum_{s=0}^{k-1} \frac{(-1)^{k-s}(\gamma+\delta+2k-2s-1)(\gamma+k-s)_{s+1}%
} {(s+1)(\gamma+\delta+2k-s)(\gamma+\delta+k-s)_{s+1}}P_{n-s-1,k-s-1}^{(%
\alpha,\beta,\gamma,\delta)}(x,y),  \label{1.26}
\end{split}%
\end{equation}
for $n\geq k\geq 1$ and
\begin{equation*}
\frac{\partial P_{n,0}^{(\alpha,\beta,\gamma,\delta)}(x,y)}{\partial\gamma}=%
\frac{\partial P_{n,0}^{(\alpha,\beta,\gamma,\delta)}(x,y)}{\partial\delta}%
=0
\end{equation*}
for $n\geq 0, k=0.$
\end{theorem}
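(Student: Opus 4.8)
The plan is to exploit the separable (tensor-product) structure of the polynomials \eqref{t10}. Since $P^{(\alpha,\beta,\gamma,\delta)}_{n,k}(x,y)=P^{(\alpha,\beta)}_{n-k}(x)\,P^{(\gamma,\delta)}_{k}(y)$, the factor $P^{(\gamma,\delta)}_{k}(y)$ is free of $\alpha$ and $\beta$, while $P^{(\alpha,\beta)}_{n-k}(x)$ is free of $\gamma$ and $\delta$. Hence each of the four parameter derivatives acts on only one factor and the product rule collapses to a single term, e.g.
\[
\frac{\partial}{\partial\alpha}P^{(\alpha,\beta,\gamma,\delta)}_{n,k}(x,y)=P^{(\gamma,\delta)}_{k}(y)\,\frac{\partial}{\partial\alpha}P^{(\alpha,\beta)}_{n-k}(x),
\]
and analogously for $\beta$ (acting on the $x$-factor) and for $\gamma,\delta$ (acting on the $y$-factor). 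This reduces the entire theorem to the known one-variable expansions \eqref{1.2} and \eqref{1.3}, applied to a Jacobi polynomial of degree $n-k$ in the cases $\alpha,\beta$ and of degree $k$ in the cases $\gamma,\delta$.

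Next I would substitute the one-variable formulas and reindex. For the $\alpha$-derivative I apply \eqref{1.2} with degree $N=n-k$ and parameters $(\alpha,\beta)$; writing the running index as $j$, the leading contribution is $\bigl(\sum_{j=0}^{n-k-1}(n-k+j+\alpha+\beta+1)^{-1}\bigr)P^{(\alpha,\beta)}_{n-k}(x)$, which after multiplication by $P^{(\gamma,\delta)}_{k}(y)$ reproduces the first sum of \eqref{1.23} upon renaming $j\mapsto s$. In the second sum I set $j=n-k-s-1$, so that $s$ again ranges over $0,\dots,n-k-1$; the decisive point is that $P^{(\alpha,\beta)}_{n-k-s-1}(x)\,P^{(\gamma,\delta)}_{k}(y)=P^{(\alpha,\beta,\gamma,\delta)}_{n-s-1,k}(x,y)$, since the two degrees add to $n-s-1$ while the $y$-degree stays $k$. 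The $\beta$-derivative is identical except that \eqref{1.3} replaces \eqref{1.2}. For $\gamma$ and $\delta$ I instead take degree $N=k$ and parameters $(\gamma,\delta)$, reindex by $j=k-s-1$, and observe that $P^{(\alpha,\beta)}_{n-k}(x)\,P^{(\gamma,\delta)}_{k-s-1}(y)=P^{(\alpha,\beta,\gamma,\delta)}_{n-s-1,k-s-1}(x,y)$, which is exactly the shifted index $(n-s-1,k-s-1)$ appearing in \eqref{1.25} and \eqref{1.26}.

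The computational heart is matching the coefficients, which is pure Pochhammer bookkeeping. Under $j=N-s-1$ the three rational factors transform cleanly: $2j+\alpha+\beta+1$ becomes $\alpha+\beta+2N-2s-1$, the denominator $N-j$ becomes $s+1$, and $N+j+\alpha+\beta+1$ becomes $\alpha+\beta+2N-s$. The Pochhammer ratios collapse via the elementary identity $(a)_N/(a)_j=(a+j)_{N-j}$, yielding $(\beta+1)_N/(\beta+1)_j=(\beta+N-s)_{s+1}$ in the numerator and $(\alpha+\beta+1)_j/(\alpha+\beta+1)_N=1/(\alpha+\beta+N-s)_{s+1}$ in the denominator; with $N=n-k$ this is precisely the coefficient in \eqref{1.23}, and with $N=k$ it is the coefficient in \eqref{1.25}. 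The one step that I expect to demand genuine care is the sign in the $\beta$- and $\delta$-derivatives: the factor $(-1)^{N+j}$ of \eqref{1.3} must be re-expressed through $j=N-s-1$, and since a parity-dependent factor can easily slip, I would track this power explicitly to confirm the exponents recorded in \eqref{1.24} and \eqref{1.26}. Finally, the degenerate cases are immediate: when $n=k$ the $x$-factor is the constant $P^{(\alpha,\beta)}_{0}$, so $\partial_\alpha=\partial_\beta=0$, and when $k=0$ the $y$-factor is the constant $P^{(\gamma,\delta)}_{0}$, so $\partial_\gamma=\partial_\delta=0$.
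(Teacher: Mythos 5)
Your reduction is exactly the paper's own proof: the paper disposes of this theorem in one line (``in view of equalities \eqref{1.2} and \eqref{1.3}, the proof is clear''), and you execute precisely that plan --- differentiate the single tensor factor that carries the parameter, apply the one-variable expansions with degree $N=n-k$ (for $\alpha,\beta$) or $N=k$ (for $\gamma,\delta$), reindex by $j=N-s-1$, and collapse the Pochhammer ratios via $(a)_N/(a)_j=(a+j)_{N-j}$. Your bookkeeping for \eqref{1.23} and \eqref{1.25}, the index identification $P^{(\alpha,\beta)}_{n-k-s-1}(x)P^{(\gamma,\delta)}_{k}(y)=P^{(\alpha,\beta,\gamma,\delta)}_{n-s-1,k}(x,y)$ and its analogue for \eqref{1.25}--\eqref{1.26}, and the degenerate cases $n=k$ and $k=0$ are all correct and complete.

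The gap is the one step you deferred. You say you ``would track the power of $(-1)$ explicitly to confirm the exponents recorded in \eqref{1.24} and \eqref{1.26}''; tracking it does \emph{not} confirm them. From \eqref{1.3} the sign attached to $P_j^{(\alpha,\beta)}$ is $(-1)^{N+j}$, and substituting $j=N-s-1$ gives
\begin{equation*}
(-1)^{N+j}=(-1)^{2N-s-1}=(-1)^{s+1},
\end{equation*}
independently of $N$. The exponent printed in \eqref{1.24} is $n-k-s=N-s$, and $(-1)^{N-s}$ equals $(-1)^{s+1}$ only when $N=n-k$ is odd; likewise the exponent $k-s$ in \eqref{1.26} is equivalent to $s+1$ only when $k$ is odd. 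For even $n-k$ every term of the second sum in \eqref{1.24} has the wrong sign. Concretely, take $n=2$, $k=0$, $\alpha=\beta=0$: then $\partial P^{(0,0,\gamma,\delta)}_{2,0}/\partial\beta=\partial P^{(0,0)}_{2}(x)/\partial\beta$, whose expansion (by \eqref{1.3}, or by differentiating $P_2^{(\alpha,\beta)}$ in closed form) is $\tfrac{7}{12}P_2^{(0,0)}(x)-\tfrac{3}{4}P_1^{(0,0)}(x)+\tfrac{1}{6}$, whereas \eqref{1.24} as printed produces $+\tfrac{3}{4}$ and $-\tfrac{1}{6}$ for the two lower-order coefficients. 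So the computation you outline is the right one, but carried to the end it refutes rather than confirms the printed formulas: the statement itself carries a parity error, and your proof can only be closed by replacing $(-1)^{n-k-s}$ and $(-1)^{k-s}$ with $(-1)^{s+1}$ (or by restricting to odd $n-k$, respectively odd $k$). Leaving the sign check as a promissory note is exactly where the argument fails.
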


\begin{proof}
In view of equalities (\ref{1.2}) and (\ref{1.3}), the proof is clear.
\end{proof}
Now, we can get similar results for Laguerre-Jacobi Koornwinder and Laguerre-Laguerre Koornwinder polynomials.
\begin{theorem}
\label{Theorem1.3} The representations of parameter derivatives with respect
to the parameters $\alpha$ and $\beta$ for Laguerre-Jacobi Koornwinder
polynomials $P^{(\alpha,\beta)}_{n,k}(x,y)$ defined by (\ref{t4}) are given
by
\begin{equation}  \label{a101*}
\frac{\partial}{\partial \alpha} P^{(\alpha,\beta)}_{n,k}(x,y)=%
\sum_{s=0}^{n-k-1}\frac{1} {n-k-s}P^{(\alpha,\beta)}_{k+s,k}(x,y)
\end{equation}
for $n\geq k+1$, $k\geq 0$ and $\frac{\partial}{\partial \alpha}
P^{(\alpha,\beta)}_{n,n}(x,y)=0$ for $n=k\geq 0$. Similarly,
\begin{eqnarray*}
\begin{split}
\frac{\partial}{\partial \beta}P^{(\alpha,\beta)}_{n,k}(x,y)=&
\sum_{s=0}^{k-1}\frac{1} {\beta+k+s+1} P^{(\alpha,\beta)}_{n,k}(x,y) \\
&+\sum_{s=0}^{k-1}\frac{(\beta+2k-2s-1)(k-s)_{s+1}} {(s+1)(\beta+2k-s)(%
\beta+k-s)_{s+1}}x^{s+1}P^{(\alpha+2s+2,\beta)}_{n-s-1,k-s-1}(x,y)
\end{split}%
\end{eqnarray*}
for $n\geq k\geq 1$ and $\frac{\partial}{\partial \beta}P^{(\alpha,%
\beta)}_{n,0}(x,y)=0$ for $n\geq 0, k=0$. It is seen that the parametric
derivative with respect to the parameter $\beta$ is not in the form of (\ref%
{1.1*}) since the coefficients include variable $x$.
\end{theorem}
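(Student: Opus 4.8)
The plan is to exploit the product structure of the definition (\ref{t4}), in which the dependence on each of $\alpha$ and $\beta$ is confined to a single univariate factor, and to reduce each parameter derivative to the one-variable formulas (\ref{1.3*}) and (\ref{1.2}) already recorded above.

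First, for the $\alpha$-derivative I note that in $P^{(\alpha,\beta)}_{n,k}(x,y)=L^{(\alpha+2k+1)}_{n-k}(x)\,x^{k}P^{(\beta,0)}_{k}(y/x)$ only the Laguerre factor depends on $\alpha$, so $\partial_\alpha P^{(\alpha,\beta)}_{n,k}=x^{k}P^{(\beta,0)}_{k}(y/x)\,\partial_\alpha L^{(\alpha+2k+1)}_{n-k}(x)$. Since $\alpha$ enters through $\alpha+2k+1$ with unit derivative, I apply (\ref{1.3*}) in degree $n-k$ and parameter $\alpha+2k+1$ to get $\sum_{s=0}^{n-k-1}\frac{1}{n-k-s}L^{(\alpha+2k+1)}_{s}(x)$. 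Reinserting the common factor and using $L^{(\alpha+2k+1)}_{s}(x)x^{k}P^{(\beta,0)}_{k}(y/x)=P^{(\alpha,\beta)}_{s+k,k}(x,y)$ yields (\ref{a101*}); the case $n=k$ is immediate since $L^{(\alpha+2k+1)}_{0}\equiv1$.

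For the $\beta$-derivative only the Jacobi factor $P^{(\beta,0)}_{k}(y/x)$ carries $\beta$, and I apply (\ref{1.2}) under the substitution $(\alpha,\beta,n)\mapsto(\beta,0,k)$. This splits $\partial_\beta P^{(\beta,0)}_{k}$ into a diagonal piece proportional to $P^{(\beta,0)}_{k}$ itself and a sum over lower degrees $j=0,\dots,k-1$. Multiplying by the $\beta$-independent factor $L^{(\alpha+2k+1)}_{n-k}(x)x^{k}$, the diagonal piece reproduces the first sum $\sum_{s}\frac{1}{\beta+k+s+1}P^{(\alpha,\beta)}_{n,k}$. The crux is to recast each lower-degree term $L^{(\alpha+2k+1)}_{n-k}(x)x^{k}P^{(\beta,0)}_{j}(y/x)$ as a two-variable polynomial. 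Here the key identity, read off directly from (\ref{t4}), is that both the Laguerre superscript $\alpha+2k+1$ and the Laguerre degree $n-k$ are invariant under the simultaneous shift $\alpha\mapsto\alpha+2s+2$, $n\mapsto n-s-1$, $k\mapsto k-s-1$, whence $P^{(\alpha+2s+2,\beta)}_{n-s-1,k-s-1}(x,y)=L^{(\alpha+2k+1)}_{n-k}(x)x^{k-s-1}P^{(\beta,0)}_{k-s-1}(y/x)$ and therefore $x^{s+1}P^{(\alpha+2s+2,\beta)}_{n-s-1,k-s-1}(x,y)=L^{(\alpha+2k+1)}_{n-k}(x)x^{k}P^{(\beta,0)}_{k-s-1}(y/x)$. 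This is exactly the shape required, with an unavoidable explicit factor $x^{s+1}$, which is what pushes the answer out of the form (\ref{1.1*}).

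The final step is coefficient bookkeeping under the reversed indexing $j=k-s-1$. I must check that the coefficient $\frac{k!\,(\beta+1)_{j}}{(\beta+1)_{k}\,j!}\cdot\frac{2j+\beta+1}{(k-j)(k+j+\beta+1)}$ coming from (\ref{1.2}) equals the claimed $\frac{(\beta+2k-2s-1)(k-s)_{s+1}}{(s+1)(\beta+2k-s)(\beta+k-s)_{s+1}}$. Under $s=k-j-1$ the matching reduces to the elementary identifications $2j+\beta+1=\beta+2k-2s-1$, $k-j=s+1$, $k+j+\beta+1=\beta+2k-s$, together with the Pochhammer simplifications $k!/j!=(j+1)_{k-j}=(k-s)_{s+1}$ and $(\beta+1)_{j}/(\beta+1)_{k}=1/(\beta+j+1)_{k-j}=1/(\beta+k-s)_{s+1}$. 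I expect this coefficient matching, and in particular keeping the reversed index and the shifted first parameter $\alpha+2s+2$ straight, to be the only real obstacle; the boundary case $k=0$ is trivial because $P^{(\beta,0)}_{0}\equiv1$ is independent of $\beta$.
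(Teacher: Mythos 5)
Your proposal is correct and follows exactly the route the paper intends for this theorem (which it leaves implicit as "similar" to Theorems \ref{Theorem 1.2} and \ref{Theorem 1.22}): differentiate the product formula (\ref{t4}), apply (\ref{1.3*}) to the Laguerre factor for $\alpha$ and (\ref{1.2}) with $(\alpha,\beta,n)\mapsto(\beta,0,k)$ to the Jacobi factor for $\beta$, then reindex by $s=k-j-1$ and absorb the lower-degree terms via the shift $\alpha\mapsto\alpha+2s+2$, which preserves the Laguerre parameter $\alpha+2k+1$ and degree $n-k$. Your coefficient bookkeeping and the Pochhammer identifications all check out, so the argument is complete.
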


\begin{theorem}
\label{Theorem1.4} For Laguerre-Laguerre Koornwinder polynomials defined by (%
\ref{t5}), we have
\begin{equation}  \label{a10*}
\frac{\partial}{\partial \alpha} P^{(\alpha,\beta)}_{n,k}(x,y)=%
\sum_{s=0}^{n-k-1}\frac{1} {n-k-s}P^{(\alpha,\beta)}_{k+s,k}(x,y)
\end{equation}
for $n\geq k+1$, $k\geq0$ and $\frac{\partial}{\partial \alpha}
P^{(\alpha,\beta)}_{n,n}(x,y)=0$ for $n=k\geq 0$. Similarly, for $n\geq
k\geq 1$
\begin{equation*}
\frac{\partial}{\partial \beta}P^{(\alpha,\beta)}_{n,k}(x,y)=\sum_{s=0}^{k-1}%
\frac{1} {s+1} x^{s+1}P^{(\alpha+2s+2,\beta)}_{n-s-1,k-s-1}(x,y)
\end{equation*}
which is different from the form of (\ref{1.1*}) since the coefficients
include variable $x$. Also, $\frac{\partial}{\partial \beta}%
P^{(\alpha,\beta)}_{n,0}(x,y)=0$ for $n\geq 0, k=0$.
\end{theorem}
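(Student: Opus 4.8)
The plan is to exploit the fact that the Laguerre--Laguerre Koornwinder polynomial (\ref{t5}) factorizes as a product in which each of the two parameters $\alpha,\beta$ enters through exactly one factor, reducing both derivatives to the one-variable Laguerre result (\ref{1.3*}). First I would treat the $\alpha$-derivative. Since only $L^{(\alpha+2k+1)}_{n-k}(x)$ depends on $\alpha$ in the product $P^{(\alpha,\beta)}_{n,k}(x,y)=L^{(\alpha+2k+1)}_{n-k}(x)\,x^{k}L^{(\beta)}_{k}(y/x)$, differentiating under the product gives $\frac{\partial}{\partial\alpha}P^{(\alpha,\beta)}_{n,k}(x,y)=x^{k}L^{(\beta)}_{k}(y/x)\,\frac{\partial}{\partial\alpha}L^{(\alpha+2k+1)}_{n-k}(x)$. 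Because $k$ is a fixed index, $\frac{\partial}{\partial\alpha}=\frac{\partial}{\partial(\alpha+2k+1)}$, so I may apply (\ref{1.3*}) with parameter $\alpha+2k+1$ and degree $n-k$, expanding the derivative as $\sum_{s=0}^{n-k-1}\frac{1}{n-k-s}L^{(\alpha+2k+1)}_{s}(x)$. Reinserting the unchanged factor $x^{k}L^{(\beta)}_{k}(y/x)$ and recognizing $L^{(\alpha+2k+1)}_{s}(x)\,x^{k}L^{(\beta)}_{k}(y/x)=P^{(\alpha,\beta)}_{k+s,k}(x,y)$ yields (\ref{a10*}) directly.

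For the $\beta$-derivative I would proceed analogously: here only $L^{(\beta)}_{k}(y/x)$ depends on $\beta$, so $\frac{\partial}{\partial\beta}P^{(\alpha,\beta)}_{n,k}(x,y)=L^{(\alpha+2k+1)}_{n-k}(x)\,x^{k}\,\frac{\partial}{\partial\beta}L^{(\beta)}_{k}(y/x)$. Applying (\ref{1.3*}) in the argument $y/x$ with parameter $\beta$ and degree $k$ gives $\frac{\partial}{\partial\beta}L^{(\beta)}_{k}(y/x)=\sum_{m=0}^{k-1}\frac{1}{k-m}L^{(\beta)}_{m}(y/x)$, so that the derivative equals $L^{(\alpha+2k+1)}_{n-k}(x)\,x^{k}\sum_{m=0}^{k-1}\frac{1}{k-m}L^{(\beta)}_{m}(y/x)$. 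The nontrivial point is then to rewrite each summand as a shifted Koornwinder polynomial, which is where the main bookkeeping lies.

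The key identity to verify is $x^{s+1}P^{(\alpha+2s+2,\beta)}_{n-s-1,k-s-1}(x,y)=L^{(\alpha+2k+1)}_{n-k}(x)\,x^{k}L^{(\beta)}_{k-s-1}(y/x)$, and I expect this to be the one step that requires care. It holds because, under the simultaneous substitution $(\alpha,n,k)\mapsto(\alpha+2s+2,\,n-s-1,\,k-s-1)$ in the definition (\ref{t5}), both the upper index $\alpha+2k+1$ of the first Laguerre factor and its degree $n-k$ are invariant, while $x^{k-s-1}\cdot x^{s+1}=x^{k}$ restores the correct power of $x$; the only genuine change is the second Laguerre factor, which becomes $L^{(\beta)}_{k-s-1}(y/x)$. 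Substituting $m=k-s-1$ in my sum (so $k-m=s+1$, and $s$ runs from $0$ to $k-1$ as $m$ runs from $k-1$ to $0$) converts $\sum_m \frac{1}{k-m}L^{(\beta)}_{m}(y/x)$ into $\sum_s \frac{1}{s+1}L^{(\beta)}_{k-s-1}(y/x)$, and combining with the identity above produces exactly the claimed representation $\sum_{s=0}^{k-1}\frac{1}{s+1}x^{s+1}P^{(\alpha+2s+2,\beta)}_{n-s-1,k-s-1}(x,y)$; the appearance of $x^{s+1}$ in the coefficients makes clear that this derivative is not of the form (\ref{1.1*}). Finally, the boundary cases follow immediately: for $n=k$ the factor $L^{(\alpha+2k+1)}_{0}(x)=1$ is constant in $\alpha$ (and the $s$-sum is empty), giving $\frac{\partial}{\partial\alpha}P^{(\alpha,\beta)}_{n,n}=0$, and for $k=0$ the factor $L^{(\beta)}_{0}(y/x)=1$ is constant in $\beta$, giving $\frac{\partial}{\partial\beta}P^{(\alpha,\beta)}_{n,0}=0$.
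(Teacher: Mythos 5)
Your proof is correct and takes essentially the same approach the paper intends: the paper gives no explicit proof of Theorem \ref{Theorem1.4}, presenting it as following ``similarly'' from the factored definition (\ref{t5}) and the one-variable Laguerre parameter derivative (\ref{1.3*}), exactly the route you follow. Your argument --- differentiating the factor carrying each parameter and checking that $\alpha+2k+1$ and $n-k$ are invariant under $(\alpha,n,k)\mapsto(\alpha+2s+2,\,n-s-1,\,k-s-1)$, so that $x^{s+1}P^{(\alpha+2s+2,\beta)}_{n-s-1,k-s-1}(x,y)=L^{(\alpha+2k+1)}_{n-k}(x)\,x^{k}L^{(\beta)}_{k-s-1}(y/x)$ --- supplies precisely the verification the paper omits, and all index bookkeeping and boundary cases check out.
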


\section{Orthogonality properties of parametric derivatives}

Now, we consider orthogonality properties for the parametric derivatives of
the polynomials on the parabolic biangle, the polynomials on
the square, Laguerre-Jacobi and Laguerre-Laguerre Koornwinder polynomials.

\begin{theorem}
\label{Theorem3} For Koornwinder polynomials on the parabolic biangle given
by (\ref{t2}) and their derivative with respect to the parameter $\alpha$,
we have for $n\geq k+1, k\geq 0$, $0\leq j\leq m; n,m\in \mathbb{N}_{0}$
\begin{equation*}
{\Big\langle P_{m,j}^{(\alpha,\beta)},\frac{\partial}{\partial\alpha}%
P_{n,k}^{(\alpha,\beta)}\Big\rangle}=\left\{
\begin{array}{cc}
0 , & \text{$k\neq j$} \\
0 , & \text{$k=j, m>n$} \\
&  \\
A_{n,k,m}^{(\alpha,\beta)} , & \text{$n>m\geq k=j$} \\
&  \\
B_{n,k}^{(\alpha,\beta)} , & \text{$k=j, n=m$}%
\end{array}%
\right.
\end{equation*}
and for $n=k\geq 0$
\begin{equation*}
{\Big\langle P_{m,j}^{(\alpha,\beta)},\frac{\partial}{\partial\alpha}
P_{n,n}^{(\alpha,\beta)}\Big\rangle}=0
\end{equation*}
where
\begin{eqnarray*}
A_{n,k,m}^{(\alpha,\beta)}= \frac{(\alpha+\beta-k+2m+%
\frac{3}{2})(\beta+m+\frac{3}{2})_{n-m}}{(n-m)(\alpha+\beta+n-k+m+\frac{3%
}{2})(\alpha+\beta+m+\frac{3}{2})_{n-m}} h_{m,k}^{(\alpha,\beta)}
\end{eqnarray*}
and
\begin{eqnarray*}
B_{n,k}^{(\alpha,\beta)}=\sum_{s=0}^{n-k-1}\frac{1}{\alpha+\beta+n+s+\frac{3%
}{2}} h_{n,k}^{(\alpha,\beta)}
\end{eqnarray*}
where $h_{n,k}^{(\alpha,\beta)}$ is given by (\ref{K1}).
\end{theorem}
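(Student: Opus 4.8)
The plan is to expand the inner product using the parameter-derivative formula (\ref{a100*}) from Theorem \ref{Theorem 1.2} and then apply the orthogonality relation (\ref{K11}). First I would substitute (\ref{a100*}) into $\langle P_{m,j}^{(\alpha,\beta)},\partial P_{n,k}^{(\alpha,\beta)}/\partial\alpha\rangle$ and use bilinearity of $\langle\cdot,\cdot\rangle$ to move the inner product inside both sums. Since every polynomial on the right-hand side of (\ref{a100*}) carries the same lower index $k$, each resulting term has the shape $\langle P_{m,j}^{(\alpha,\beta)},P_{r,k}^{(\alpha,\beta)}\rangle$ for various $r\le n$, and by (\ref{K11}) this equals $h_{r,k}^{(\alpha,\beta)}\delta_{m,r}\delta_{j,k}$. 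Thus the whole expression vanishes unless $j=k$, which immediately disposes of the first case.

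Assuming $j=k$, I would then track which single Kronecker delta $\delta_{m,r}$ can survive. The first sum in (\ref{a100*}) contributes the diagonal polynomial $P_{n,k}^{(\alpha,\beta)}$, giving a nonzero term only when $m=n$; the second sum contributes $P_{n-s-1,k}^{(\alpha,\beta)}$ for $0\le s\le n-k-1$, so its total-degree index ranges over $k\le n-s-1\le n-1$, giving a nonzero term only when $m=n-s-1$, i.e.\ $s=n-m-1$. When $m>n$ neither source can match $m$, so the inner product is $0$, settling the second case. When $m=n$ only the first sum survives and reproduces the prefactor $\sum_{s=0}^{n-k-1}1/(\alpha+\beta+n+s+\frac{3}{2})$ times $h_{n,k}^{(\alpha,\beta)}$, which is exactly $B_{n,k}^{(\alpha,\beta)}$, settling the fourth case.

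The one substantive case is $n>m\ge k=j$. Here the first sum contributes nothing (since $m\neq n$), and in the second sum exactly the index $s=n-m-1$ is admissible: the constraints $0\le s$ and $s\le n-k-1$ translate into $m\le n-1$ and $m\ge k$, both of which hold. The surviving contribution is the coefficient of $P_{n-s-1,k}^{(\alpha,\beta)}$ in (\ref{a100*}), evaluated at $s=n-m-1$, multiplied by $h_{m,k}^{(\alpha,\beta)}$. The main task is to verify that this evaluation reproduces $A_{n,k,m}^{(\alpha,\beta)}$: substituting $s=n-m-1$ turns the two linear factors into $\alpha+\beta-k+2m+\frac{3}{2}$ and $\alpha+\beta+n-k+m+\frac{3}{2}$, the factor $s+1$ into $n-m$, and the two Pochhammer symbols $(\beta+n-s+\frac{1}{2})_{s+1}$ and $(\alpha+\beta+n-s+\frac{1}{2})_{s+1}$ into $(\beta+m+\frac{3}{2})_{n-m}$ and $(\alpha+\beta+m+\frac{3}{2})_{n-m}$ respectively. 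This is the only place a genuine computation occurs, and it is a routine reindexing of the Pochhammer arguments; I expect it to be the main (though modest) obstacle, chiefly in keeping the half-integer shifts straight. Finally, for $n=k$ the derivative $\partial P_{n,n}^{(\alpha,\beta)}/\partial\alpha$ is identically zero by Theorem \ref{Theorem 1.2}, so the last assertion is immediate.
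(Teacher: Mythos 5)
Your proposal is correct and follows essentially the same route as the paper's own proof: substitute the expansion (\ref{a100*}) into the inner product, apply bilinearity and the orthogonality relation (\ref{K11}), and split into the cases $k\neq j$, $k=j$ with $m>n$, $n>m\geq k=j$ (where only the term $s=n-m-1$ survives), and $k=j$ with $m=n$, with the $n=k$ case trivial since the derivative vanishes. Your explicit verification that the substitution $s=n-m-1$ yields $A_{n,k,m}^{(\alpha,\beta)}$ is the same computation the paper leaves to the reader, and it checks out.
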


\begin{proof}
We will divide the proof into two cases.\\
\textbf{Case 1.} We consider the case $n=k\geq 0$. It is seen that
$$
{\Big\langle P_{m,j}^{(\alpha,\beta)},\frac{\partial}{\partial\alpha} P_{n,n}^{(\alpha,\beta)}\Big\rangle}=0
$$
since $\frac{\partial}{\partial\alpha}P_{n,n}^{(\alpha,\alpha)}(x,y)=0$ for $n=k\geq 0$.\\
\textbf{Case 2.} We assume that $n\geq k+1, k\geq 0$. From (\ref{a100*}), we can write
\begin{eqnarray}
\begin{split}
{\Big\langle P_{m,j}^{(\alpha,\beta)},\frac{\partial}{\partial\alpha} P_{n,k}^{(\alpha,\beta)}\Big\rangle}
&=\sum_{s=0}^{n-k-1}\frac{1}{\alpha+\beta+n+s+\frac{3}{2}}{\Big\langle P_{m,j}^{(\alpha,\beta)},P_{n,k}^{(\alpha,\beta)}\Big\rangle}\\
&+\sum_{s=0}^{n-k-1} \frac{(\alpha+\beta+2n-k-2s-\frac{1}{2})(\beta+n-s+\frac{1}{2})_{s+1}}{(s+1)(\alpha+\beta+2n-k-s+\frac{1}{2})(\alpha+\beta+n-s+\frac{1}{2})_{s+1}}{\Big\langle P_{m,j}^{(\alpha,\beta)}, P_{n-s-1,k}^{(\alpha,\beta)}\Big\rangle}.\label{2.17}
\end{split}
\end{eqnarray}
For this case, we consider three subcases.\\
\textbf{Case 2.1.} Let consider the case $k\neq j$ or $k=j, m>n$. It follows from (\ref{K11}) that
\begin{eqnarray*}
{\Big\langle P_{m,j}^{(\alpha,\beta)},\frac{\partial}{\partial\alpha} P_{n,k}^{(\alpha,\beta)}\Big\rangle}=0.
\end{eqnarray*}
\\
\textbf{Case 2.2.} Assume that $k=j, n>m$. Since the first inner product in the right-hand side of the equality (\ref{2.17}) from (\ref{K11}) is zero, we get
\begin{eqnarray*}
{\Big\langle P_{m,k}^{(\alpha,\beta)},\frac{\partial}{\partial\alpha} P_{n,k}^{(\alpha,\beta)}\Big\rangle}
=\sum_{s=0}^{n-k-1} \frac{(\alpha+\beta+2n-k-2s-\frac{1}{2})(\beta+n-s+\frac{1}{2})_{s+1}}{(s+1)(\alpha+\beta+2n-k-s+\frac{1}{2})(\alpha+\beta+n-s+\frac{1}{2})_{s+1}}
h_{n-s-1,k}^{(\alpha,\beta)}\delta_{n-s-1,m},
\end{eqnarray*}
which contains only one non-vanishing term with $s=n-m-1$ for $m\geq k$. One may deduce that
\begin{eqnarray*}
A_{n,k,m}^{(\alpha,\beta)}= \frac{(\alpha+\beta-k+2m+%
\frac{3}{2})(\beta+m+\frac{3}{2})_{n-m}}{(n-m)(\alpha+\beta+n-k+m+\frac{3%
}{2})(\alpha+\beta+m+\frac{3}{2})_{n-m}} h_{m,k}^{(\alpha,\beta)}
\end{eqnarray*}
where $h_{n,k}^{(\alpha,\beta)}$ is given by (\ref{K1}).\\
\textbf{Case 2.3} Let $k=j, m=n$. Then, from the relation (\ref{K11}) we have
\begin{eqnarray*}
{\Big\langle P_{n,k}^{(\alpha,\beta)},\frac{\partial}{\partial\alpha} P_{n,k}^{(\alpha,\beta)}\Big\rangle}
=\sum_{s=0}^{n-k-1}\frac{1}{\alpha+\beta+n+s+\frac{3}{2}}{\Big\langle P_{n,k}^{(\alpha,\beta)},P_{n,k}^{(\alpha,\beta)}\Big\rangle}=\sum_{s=0}^{n-k-1}\frac{1}{\alpha+\beta+n+s+\frac{3}{2}}
h_{n,k}^{(\alpha,\beta)},
\end{eqnarray*}
 which completes the proof.
\end{proof}
By using the parameter derivatives given in Theorem \ref{Theorem 1.22} and
the relation (\ref{t10*}), the next theorem is readily verified.

\begin{theorem}
\label{Theorem4} For Koornwinder polynomials on the square defined by (\ref%
{t10}), we have for $0\leq j\leq m; n,m\in \mathbb{N}_{0}$, $n\geq k+1,
k\geq 0$
\begin{equation*}
{\Big\langle P_{m,j}^{(\alpha,\beta,\gamma,\delta)},\frac{\partial}{%
\partial\alpha}P_{n,k}^{(\alpha,\beta,\gamma,\delta)}\Big\rangle}=\left\{
\begin{array}{cc}
0 , & \text{$k\neq j$} \\
0 , & \text{$k=j, m>n$} \\
&  \\
C_{n,k,m}^{(\alpha,\beta,\gamma,\delta)} , & \text{$n>m\geq k=j$} \\
&  \\
D_{n,k}^{(\alpha,\beta,\gamma,\delta)} , & \text{$k=j, n=m$}%
\end{array}%
\right.
\end{equation*}
\newline
\begin{equation*}
{\Big\langle P_{m,j}^{(\alpha,\beta,\gamma,\delta)},\frac{\partial}{%
\partial\beta}P_{n,k}^{(\alpha,\beta,\gamma,\delta)}\Big\rangle}=\left\{
\begin{array}{cc}
0 , & \text{$k\neq j$} \\
0 , & \text{$k=j, m>n$} \\
&  \\
E_{n,k,m}^{(\alpha,\beta,\gamma,\delta)} , & \text{$n>m\geq k=j$} \\
&  \\
D_{n,k}^{(\alpha,\beta,\gamma,\delta)} , & \text{$k=j, n=m$}%
\end{array}%
\right.
\end{equation*}
and for $n=k\geq 0$
\begin{equation*}
{\Big\langle P_{m,j}^{(\alpha,\beta,\gamma,\delta)},\frac{\partial}{%
\partial\alpha}P_{n,n}^{(\alpha,\beta,\gamma,\delta)}\Big\rangle}={%
\Big\langle P_{m,j}^{(\alpha,\beta,\gamma,\delta)},\frac{\partial}{%
\partial\beta}P_{n,n}^{(\alpha,\beta,\gamma,\delta)}\Big\rangle}=0
\end{equation*}
where
\begin{eqnarray*}
C_{n,k,m}^{(\alpha,\beta,\gamma,\delta)}=
\frac{%
(\alpha+\beta+2m-2k+1)(\beta-k+m+1)_{n-m}} {(n-m)(\alpha+\beta+n+m-2k+1)(%
\alpha+\beta-k+m+1)_{n-m}} d_{m-k}^{(\alpha,\beta)}d_{k}^{(\gamma,%
\delta)}
\end{eqnarray*}
\begin{eqnarray*}
D_{n,k}^{(\alpha,\beta,\gamma,\delta)}=\sum_{s=0}^{n-k-1}\frac{1}{%
n-k+s+\alpha+\beta+1}d_{n-k}^{(\alpha,\beta)}d_{k}^{(\gamma,\delta)},
\end{eqnarray*}
\begin{eqnarray*}
E_{n,k,m}^{(\alpha,\beta,\gamma,\delta)}=
\frac{%
(-1)^{m-k+1}(\alpha+\beta+2m-2k+1)(\alpha-k+m+1)_{n-m}} {(n-m)(\alpha+\beta+n+m-2k+1)(%
\alpha+\beta-k+m+1)_{n-m}} d_{m-k}^{(\alpha,\beta)}d_{k}^{(\gamma,\delta)}
\end{eqnarray*}
where $d_{n}^{(\alpha,\beta)}$ is defined as in (\ref{j1}).
\end{theorem}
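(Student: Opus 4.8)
The plan is to mirror the proof of Theorem~\ref{Theorem3}, replacing the single orthogonality relation (\ref{K11}) for the parabolic biangle by the product orthogonality relation (\ref{t10*}) for the square. The decisive structural feature is that the expansions (\ref{1.23}) and (\ref{1.24}) coming from Theorem~\ref{Theorem 1.22} never alter the second index $k$: every polynomial appearing on the right-hand side is of the form $P_{n,k}^{(\alpha,\beta,\gamma,\delta)}$ or $P_{n-s-1,k}^{(\alpha,\beta,\gamma,\delta)}$. Consequently the Kronecker factor $\delta_{k,j}$ in (\ref{t10*}) immediately annihilates the inner product whenever $k\neq j$, which disposes of the first line of each case table at once.

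First I would treat the degenerate case $n=k\geq 0$: by Theorem~\ref{Theorem 1.22} both $\partial P_{n,n}^{(\alpha,\beta,\gamma,\delta)}/\partial\alpha$ and $\partial P_{n,n}^{(\alpha,\beta,\gamma,\delta)}/\partial\beta$ vanish identically, so the corresponding pairings are zero. For $n\geq k+1$, $k\geq 0$, I would insert (\ref{1.23}) into the pairing
\begin{equation*}
{\Big\langle P_{m,j}^{(\alpha,\beta,\gamma,\delta)},\frac{\partial}{\partial\alpha}P_{n,k}^{(\alpha,\beta,\gamma,\delta)}\Big\rangle}
\end{equation*}
and use bilinearity to split it into a diagonal sum carrying $\langle P_{m,j},P_{n,k}\rangle$ and a shifted sum carrying $\langle P_{m,j},P_{n-s-1,k}\rangle$. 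Applying (\ref{t10*}) collapses these to $d_{n-k}^{(\alpha,\beta)}d_k^{(\gamma,\delta)}\delta_{n,m}\delta_{k,j}$ and $d_{n-s-1-k}^{(\alpha,\beta)}d_k^{(\gamma,\delta)}\delta_{n-s-1,m}\delta_{k,j}$ respectively.

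The case analysis is then purely combinatorial. Assuming $k=j$: if $m>n$ the diagonal term needs $m=n$ and the shifted terms need $m=n-s-1\leq n-1$, so both vanish; if $n=m$ the diagonal sum survives in full while the shifted sum would require $s=-1$ and hence drops out, yielding $D_{n,k}^{(\alpha,\beta,\gamma,\delta)}$; if $n>m\geq k$ the diagonal term dies and the shifted sum retains exactly the single index $s=n-m-1$, which lies in the admissible range $0\leq s\leq n-k-1$ precisely because $k\leq m$ and $m\leq n-1$. This last surviving term is the only place where a nontrivial evaluation is required.

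The main obstacle, therefore, is the closed-form reduction of that surviving coefficient. Substituting $s=n-m-1$ into the coefficient of (\ref{1.23}) and using $\langle P_{m,k},P_{n-s-1,k}\rangle=d_{m-k}^{(\alpha,\beta)}d_k^{(\gamma,\delta)}$, I would verify the Pochhammer identities $(\beta+n-k-s)_{s+1}=(\beta-k+m+1)_{n-m}$ and $(\alpha+\beta+n-k-s)_{s+1}=(\alpha+\beta-k+m+1)_{n-m}$ together with the linear simplifications $s+1=n-m$, $\alpha+\beta+2n-2k-2s-1=\alpha+\beta+2m-2k+1$ and $\alpha+\beta+2n-2k-s=\alpha+\beta+n+m-2k+1$; these substitutions turn the coefficient into $C_{n,k,m}^{(\alpha,\beta,\gamma,\delta)}$. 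The $\partial/\partial\beta$ computation proceeds identically from (\ref{1.24}): the sign $(-1)^{n-k-s}$ becomes $(-1)^{m-k+1}$ at $s=n-m-1$, and the Pochhammer $(\alpha+n-k-s)_{s+1}=(\alpha-k+m+1)_{n-m}$ replaces its $\beta$-analogue, producing $E_{n,k,m}^{(\alpha,\beta,\gamma,\delta)}$. I expect nothing beyond these routine Pochhammer manipulations, since the entire second-index bookkeeping is handled automatically by $\delta_{k,j}$.
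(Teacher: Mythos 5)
Your proposal is correct and is essentially the paper's own argument: the paper dispenses with a written proof, stating that the result is ``readily verified'' from Theorem~\ref{Theorem 1.22} and the orthogonality relation (\ref{t10*}), which is exactly the substitution-plus-case-analysis you carry out, mirroring the proof of Theorem~\ref{Theorem3}. Your Pochhammer reductions at $s=n-m-1$ (including the sign $(-1)^{m-k+1}$ for the $\beta$-derivative) check out, so you have simply supplied the details the paper leaves implicit.
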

Similarly, using the results in Theorem \ref{Theorem1.3} and Theorem \ref{Theorem1.4}, one can easily
obtain the next results.
\begin{theorem}
\label{Theorem5} For Laguerre-Jacobi Koornwinder polynomials $%
P^{(\alpha,\beta)}_{n,k}(x,y)$ defined by (\ref{t4}), $n,m\in \mathbb{N}%
_{0}, 0\leq j\leq m $, we get for $n\geq k+1, k\geq 0$
\begin{equation*}
{\Big\langle P_{m,j}^{(\alpha,\beta)},\frac{\partial}{\partial\alpha}%
P_{n,k}^{(\alpha,\beta)}\Big\rangle}=\left\{
\begin{array}{cc}
0 , & \text{$k\neq j$} \\
0 , & \text{$k=j, m\geq n$} \\
&  \\
G_{n,k,m}^{(\alpha,\beta)} , & \text{$n>m\geq k=j$}%
\end{array}%
\right.
\end{equation*}
and for $n=k\geq 0$
\begin{equation*}
{\Big\langle P_{m,j}^{(\alpha,\beta)},\frac{\partial}{\partial\alpha}
P_{n,n}^{(\alpha,\beta)}\Big\rangle}=0
\end{equation*}
where
\begin{eqnarray*}
G_{n,k,m}^{(\alpha,\beta)}=\frac{1} {n-m}%
s_{m,k}^{(\alpha,\beta)}
\end{eqnarray*}
where $s_{n,k}^{(\alpha,\beta)}$ is given by (\ref{K2}).
\end{theorem}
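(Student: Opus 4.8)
The plan is to substitute the parameter-derivative expansion of $P_{n,k}^{(\alpha,\beta)}$ with respect to $\alpha$ obtained in Theorem~\ref{Theorem1.3} into the inner product and then invoke the orthogonality relation for the Laguerre--Jacobi Koornwinder polynomials. The whole computation reduces to a single application of orthogonality together with careful bookkeeping of the summation index; there is no genuine analytic content.

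First I would dispose of the degenerate case $n=k\geq 0$. Theorem~\ref{Theorem1.3} gives $\frac{\partial}{\partial\alpha}P_{n,n}^{(\alpha,\beta)}(x,y)=0$, so the pairing $\langle P_{m,j}^{(\alpha,\beta)},\frac{\partial}{\partial\alpha}P_{n,n}^{(\alpha,\beta)}\rangle$ vanishes identically, which settles that part of the statement immediately.

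For the principal case $n\geq k+1$ I would insert the expansion (\ref{a101*}), namely $\frac{\partial}{\partial\alpha}P_{n,k}^{(\alpha,\beta)}=\sum_{s=0}^{n-k-1}\frac{1}{n-k-s}P_{k+s,k}^{(\alpha,\beta)}$, and use bilinearity to write the pairing as $\sum_{s=0}^{n-k-1}\frac{1}{n-k-s}\langle P_{m,j}^{(\alpha,\beta)},P_{k+s,k}^{(\alpha,\beta)}\rangle$. The orthogonality relation, whose normalizing constant $s_{n,k}^{(\alpha,\beta)}$ is recorded in (\ref{K2}), reads $\langle P_{m,j}^{(\alpha,\beta)},P_{k+s,k}^{(\alpha,\beta)}\rangle = s_{k+s,k}^{(\alpha,\beta)}\delta_{m,k+s}\delta_{j,k}$. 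The factor $\delta_{j,k}$ forces the pairing to vanish whenever $j\neq k$, while the factor $\delta_{m,k+s}$ selects the single index $s=m-k$ from the sum.

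The only remaining point is to verify when this distinguished index $s=m-k$ actually falls inside the summation range $0\leq s\leq n-k-1$, which is equivalent to $k\leq m\leq n-1$. This splits the case $j=k$ into two subcases: when $m\geq n$ no admissible value of $s$ exists and the pairing is zero, whereas when $n>m\geq k$ precisely the term with $s=m-k$ survives, yielding $\frac{1}{n-k-(m-k)}\,s_{m,k}^{(\alpha,\beta)}=\frac{1}{n-m}\,s_{m,k}^{(\alpha,\beta)}=G_{n,k,m}^{(\alpha,\beta)}$. I expect this index verification to be the only delicate step, since everything else is a direct and essentially algebraic consequence of the expansion in Theorem~\ref{Theorem1.3} and the orthogonality of the underlying polynomials.
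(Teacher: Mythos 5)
Your proof is correct and follows exactly the route the paper intends: the paper states Theorem~\ref{Theorem5} as an immediate consequence of Theorem~\ref{Theorem1.3} and the orthogonality relation with constants $s_{n,k}^{(\alpha,\beta)}$, precisely the substitution-plus-Kronecker-delta bookkeeping you carried out (mirroring the paper's detailed proof of Theorem~\ref{Theorem3}). Your index analysis, including the observation that $m\geq n$ gives zero because the expansion (\ref{a101*}) contains no diagonal term, matches the stated case structure.
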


\begin{theorem}
\label{Theorem6} For Laguerre-Laguerre Koornwinder polynomials $%
P^{(\alpha,\beta)}_{n,k}(x,y)$, $n,m\in \mathbb{N}_{0}, 0\leq j\leq m $, the
following results hold for $n\geq k+1, k\geq 0$
\begin{equation*}
{\Big\langle P_{m,j}^{(\alpha,\beta)},\frac{\partial}{\partial\alpha}%
P_{n,k}^{(\alpha,\beta)}\Big\rangle}=\left\{
\begin{array}{cc}
0 , & \text{$k\neq j$} \\
0 , & \text{$k=j, m\geq n$} \\
&  \\
H_{n,k,m}^{(\alpha,\beta)} , & \text{$n>m\geq k=j$}%
\end{array}%
\right.
\end{equation*}
and for $n=k\geq 0$
\begin{equation*}
{\Big\langle P_{m,j}^{(\alpha,\beta)},\frac{\partial}{\partial\alpha}
P_{n,n}^{(\alpha,\beta)}\Big\rangle}=0
\end{equation*}
where
\begin{eqnarray*}
H_{n,k,m}^{(\alpha,\beta)}=\frac{1} {n-m}%
t_{m,k}^{(\alpha,\beta)}
\end{eqnarray*}
where $t_{n,k}^{(\alpha,\beta)}$ is defined by (\ref{K3}).
\end{theorem}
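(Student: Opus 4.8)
The plan is to mirror the argument used for Theorem~\ref{Theorem3}, since the derivative expansion supplied by Theorem~\ref{Theorem1.4} is structurally even simpler than the one for the parabolic biangle. First I would dispose of the degenerate case $n=k\geq 0$: by Theorem~\ref{Theorem1.4} one has $\partial P_{n,n}^{(\alpha,\beta)}/\partial\alpha=0$, so the inner product vanishes identically, which is the last displayed equation of the statement.

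For the main case $n\geq k+1$, I would substitute the expansion (\ref{a10*}), namely
\[
\frac{\partial}{\partial\alpha}P_{n,k}^{(\alpha,\beta)}(x,y)=\sum_{s=0}^{n-k-1}\frac{1}{n-k-s}\,P_{k+s,k}^{(\alpha,\beta)}(x,y),
\]
into the bilinear form $\langle P_{m,j}^{(\alpha,\beta)},\partial P_{n,k}^{(\alpha,\beta)}/\partial\alpha\rangle$ and interchange the finite sum with the inner product. This reduces the computation to evaluating $\langle P_{m,j}^{(\alpha,\beta)},P_{k+s,k}^{(\alpha,\beta)}\rangle$, which by the orthogonality relation defining (\ref{K3}) equals $t_{m,k}^{(\alpha,\beta)}\,\delta_{m,k+s}\,\delta_{j,k}$.

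The remaining work is purely combinatorial bookkeeping on the Kronecker deltas. The factor $\delta_{j,k}$ annihilates the entire sum when $k\neq j$. When $k=j$, the factor $\delta_{m,k+s}$ forces $s=m-k$, so at most one term can survive; this term is present precisely when $0\leq m-k\leq n-k-1$, that is, when $k\leq m\leq n-1$. Consequently, for $m\geq n$ the admissible sum is empty and the inner product is zero, whereas for $n>m\geq k$ the single surviving term gives
\[
\frac{1}{n-k-(m-k)}\,t_{m,k}^{(\alpha,\beta)}=\frac{1}{n-m}\,t_{m,k}^{(\alpha,\beta)}=H_{n,k,m}^{(\alpha,\beta)},
\]
exactly as claimed.

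I expect no genuine obstacle here. Unlike Theorem~\ref{Theorem3}, the expansion (\ref{a10*}) contains only a single sum whose terms all share the second index $k$, so no delicate Pochhammer manipulation is needed and the coefficient of the surviving term is already in closed form. The only point demanding care is checking that the index $s=m-k$ really lies in the summation range $\{0,1,\dots,n-k-1\}$; this verification is precisely what separates the vanishing case $m\geq n$ from the nontrivial case $n>m\geq k$.
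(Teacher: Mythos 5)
Your proposal is correct and follows exactly the route the paper intends: the paper gives no separate proof of Theorem~\ref{Theorem6}, stating only that it follows from Theorem~\ref{Theorem1.4} together with the orthogonality relation defining (\ref{K3}), which is precisely your substitution of the expansion (\ref{a10*}) into the inner product and the Kronecker-delta bookkeeping (including the key observation that the surviving index $s=m-k$ lies in range only when $n>m\geq k$, so the $m=n$ case vanishes here, unlike in Theorem~\ref{Theorem3}).
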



\begin{thebibliography}{99}

\bibitem{A} R. Aktas, A note on parameter derivatives of the Jacobi
polynomials on the triangle, Applied Mathematics and Computation, 247
(2014), 368-372.

\bibitem{A1} R. Aktas, On parameter derivatives of a family of polynomials
in two variables, Applied Mathematics and Computation, 256 (2015), 769-777.

\bibitem{Froehlich} J. Froehlich, Parameter derivatives of the Jacoby
polynomials and the gaussian hypergeometric function, Integral Transforms
Spec. Funct., 2(4) (1994), 253-266.

\bibitem{Koepf} W. Koepf, Identities for families of orthogonal polynomials
and special functions, Integral Transforms Spec. Funct., 5(1-2) (1997),
69-102.

\bibitem{Koepf-schmersau} W. Koepf and D. Schmersau, Representations of
orthogonal polynomials, J. Comput. Appl. Math., 90 (1998), 57-94.

\bibitem{Lewanowicz} S. Lewanowicz, Representations for the parameter
derivatives of the classical orthogonal polynomials , Rend. Circ. Mat.
Palermo, Ser. II, Suppl. 68 (2002), 599-613.

\bibitem{Ronveaux} A. Ronveaux, A. Zarzo, I. Area and E. Godoy, Classical
orthogonal polynomials: dependence on parameters,J.Comput. Appl. Math., 121
(2000), 95-112.

\bibitem{Szmytkowski} R. Szmytkowski, A note on parameter derivatives of
classical orthogonal polynomials, arXiv: 0901.2639v3.

\bibitem{Szmytkowski5} R. Szmytkowski, On the derivative of the Legendre
function of the first kind with respect to its degree, J. Phys. A, 39
(2006), 15147-15172 [corrigendum: J. Phys. A, 40 (2007), 7819-7820].

\bibitem{Szmytkowski6} R. Szmytkowski, Addendum to `On the derivative of the
Legendre function of the first kind  with respect to its degree', J. Phys.
A, 40 (2007), 14887-14891.

\bibitem{Szmytkowski4} R. Szmytkowski, The parameter derivatives $%
[\partial^{2}P_{\nu}(z)/\partial \nu^{2}]_{\nu=0}$ and $[\partial^{3}P_{%
\nu}(z)/\partial\nu^{3}]_{\nu=0}$, where $P_{\nu}(z)$ is the Legendre
function of the first kind, preprint arXiv:1301.6586.

\bibitem{Szmytkowski1} R. Szmytkowski, On the derivative of the associated
Legendre function of the first kind of integer degree with respect to its
order (with applications to the construction of the associated Legendre
function of the second kind of integer degree and order), J. Math. Chem., 46
(2009), 231-260.

\bibitem{Szmytkowski2} R. Szmytkowski, On the derivative of the associated
Legendre function of the first kind of integer order with respect to its
degree (with applications to the construction of the associated Legendre
function of the second kind of integer degree and order), J. Math. Chem., 49
(2011), 1436-1477.

\bibitem{Szmytkowski3} R. Szmytkowski, On parameter derivatives of the
associated Legendre function of the first kind (with applications to the
construction of the associated Legendre function of the second kind of
integer degree and order), J. Math. Anal.Appl., 386 (2012), 332-342.

\bibitem{Wulkow} M. Wulkow, Numerical treatment of countable systems of
ordinary differential equations, Konrad-Zuse-Zentrum Berlin, Techn. Rep. TR
90-8, 1990.

\bibitem{Rainville} E.D. Rainville, Special Functions,The Macmillan Co., New
York, 1960.

\bibitem{FPP} L. Fernandez, T. E. Perez and M. Pinar, On Koornwinder
classical orthogonal polynomials in two variables, J. Comput. Appl. Math.
236 (2012), 3817--3826.

\bibitem{Marcellan} F. Marcellan, M. E. Marriaga, T. E. Perez and M. A.
Pinar, Matrix Pearson equations satisfied by Koornwinder weights in two
variables, arXiv:1411.2268

\bibitem{Koornwinder} T. H. Koornwinder, Two--variable analogues of the
classical orthogonal polynomials, in Theory and Application of Special
Functions, R. Askey Editor, Academic Press, New York, 1975. 435--495.

\bibitem{DX} C. F. Dunkl and Y. Xu, Orthogonal Polynomials of Several
Variables. Encyclopedia of Mathematics and its Applications 81. Cambridge:
Cambridge University Press, 2001.
\end{thebibliography}
\end{document}